\documentclass[12pt,fleqn] {article}
\usepackage{amssymb,amsmath,amsthm,setspace}
\usepackage{graphicx}
\textheight 220 truemm \textwidth 160 truemm \topmargin = -1.0cm
\oddsidemargin = -0.25cm \evensidemargin = -0.25cm

\newcommand{\R}{\mathbb{R}}

\newcommand{\adb}{\allowdisplaybreaks}

\newtheorem{Theorem}{Theorem}

\newtheorem{Corollary}{Corollary}

\newcommand{\inv}{\frac{1}}

\usepackage{multicol}
\usepackage{amssymb}
\usepackage{natbib}

\usepackage{color}

%\numberwithin{equation}{section} \numberwithin{figure}{section}
%\numberwithin{table}{section} \numberwithin{Definition}{section}
%\numberwithin{Lemma}{section} \numberwithin{Corollary}{section}
%\numberwithin{Proposition}{section} \numberwithin{Remark}{section}
%\numberwithin{Theorem}{section}
\date{\today}
\title{Quantile function expansion using regularly varying functions}
\author{Thomas Fung$^{a,}\footnote{Corresponding Author. Honorary Associate, University of Sydney. Email address:  {\tt thomas.fung@mq.edu.au} (Thomas Fung). }$\,\, and Eugene Seneta$^{b}$ \\
{\small $^a$ Department of Statistics, Macquarie University, NSW 2109, Australia}\\
{\small $^b$ School of Mathematics and Statistics, University of Sydney, NSW 2006, Australia}\\
}
%\date{Received: date / Accepted: date}
% The correct dates will be entered by the editor

\begin{document}
\maketitle

\begin{abstract}
We present a simple result that allows us to evaluate the asymptotic order of the remainder of a partial asymptotic expansion of the quantile function $h(u)$ as $u\to 0^+$ or $1^-$. This is focussed on important univariate distributions when $h(\cdot)$ has no simple closed form, with a view to assessing asymptotic rate of decay to zero of tail dependence in the context of bivariate copulas. The Introduction motivates the study in terms of the standard Normal. The Normal, Skew-Normal and Gamma are  used as initial examples. Finally, we discuss  approximation to the lower quantile of the Variance-Gamma and Skew-Slash distributions.
 
 \noindent{\it Keywords}: Asymptotic expansion; asymptotic tail dependence; Quantile function; Regularly varying functions; Skew-Slash distribution; Variance-Gamma distribution.
\end{abstract}

\section{Introduction}
\label{intro}

This paper is motivated by the need for a generally applicable procedure to study the asymptotic behaviour as $u\to 0^+$ of $F_i^{-1}(u)$, $i=1,2$ and $C(u,u) = P(X_1\leq F_1^{-1}(u), X_2 \leq F_2^{-1}(u))$ where the $F_i^{-1}(u)$'s are the inverse of continuous and strictly increasing cdf's $F_i(u)$, $i=1,2$, and a bivariate copula function respectively.

A random vector $\textbf{X} = (X_1, X_2)^{\top}$ with marginal inverse distribution function $F_i^{-1}(u)$, $i=1,2$ has coefficient of lower tail dependence $\lambda_L$ if the limit $\lambda_L = \lim_{u\to 0^+} \lambda_L(u)$ exists, where
\begin{equation*}
\lambda_L(u)  = P\left(X_1\leq F_1^{-1}(u) | X_2\leq F_2^{-1}(u)\right)  = \frac{P\left(X_1\leq F_1^{-1}(u), X_2\leq F_2^{-1}(u)\right)}{P\left(X_2\leq F_2^{-1}(u)\right)} = \frac{C(u ,u)}{u}. 
\end{equation*}
If $\lambda_L=0$ then $\textbf{X}$ is said to be asymptotically independent in the lower tail. In this situation in particular the asymptotic rate of approach to the limit 0 of $\lambda_L(u)$ is an indication of the strength of asymptotic independence. 

The classical case is the bivariate Normal with correlation coefficient $\rho$ as discussed in \cite{EMS2001}. It was shown in \cite{FS2011} that if 
\begin{equation}
\lambda(u) = 2\Phi\left(\Phi^{-1}(u)\sqrt{\frac{1-\rho}{1+\rho}}\right) \sim u^{\theta}L(u)\label{lower tail dependence Normal}
\end{equation}
where $\Phi(x)$, $-\infty<x<\infty$ is the cdf of the standard Normal, and $L(u)$ is a slowly varying function as $u\to 0^+$, then 
\begin{equation*}
\lambda_L(u) \sim \frac{u^{\theta}L(u)}{\theta+1}. 
\end{equation*}
\cite{FS2011} also showed in their Theorem 3 that 
\begin{align}
\lambda(u) &\sim u^{\frac{1-\rho}{1+\rho}}L(u) \quad \text{where} \label{normal slowly varying function}\\
\notag L(u) & \sim 2\sqrt{\frac{1+\rho}{1-\rho}}\left(-4\pi \log u\right)^{-\frac{\rho}{1+\rho}}. 
\end{align}
The proof within their Theorem 2 depended heavily on the very specific asymptotic relation as $x\to -\infty$ between the cdf $\Phi$ and the corresponding standard Normal pdf $f$.

We were unable to use in that paper, for this purpose, the expression for the quantile function $\Phi^{-1}(u)$ of the standard Normal distribution 
\begin{equation}
y(u) = -\sqrt{-2\log(u\sqrt{-4\pi \log u})},  \label{eqnA}
\end{equation}
given for example by \cite{LT1997} from a truncated expansion of $\Phi^{-1}(u)$, since we did not know the asymptotic order of the reminder $\Phi^{-1}(u) - y(u)$. We were able to prove that $\Phi^{-1}(u) \sim y(u)$ as $u\to 0^+$, but inasmuch as this asserts only that 
\begin{equation*}
\Phi^{-1}(u) = y(u)\left(1+o(1)\right) = -\sqrt{-2\log u}\left(1+o(1)\right),
\end{equation*}
we can only be sure of the dominant term of a truncated asymptotic expansion, and this was inadequate to proceed from (\ref{lower tail dependence Normal}). 

However our general result below, when applied to the standard Normal, gives 
\begin{equation*}
\Phi^{-1}(u) = y(u) \left(1+O\left(\frac{\log|\log u|}{(\log u)^2}\right)\right).
\end{equation*}
Since 
\begin{equation*}
\Phi(y) \sim - y^{-1}\inv{\sqrt{2\pi}}e^{-\frac{y^2}{2}}
\end{equation*}
as $y\to -\infty$, from (\ref{lower tail dependence Normal}) 
%and (\ref{normal slowly varying function}) 
\begin{equation*}
\lambda(u) \sim k(u) e^{-\inv{2}\left(y(u)\sqrt{\frac{1-\rho}{1+\rho}}\left(1+O\left(\frac{\log|\log u|}{(\log u)^2}\right)\right)\right)^2}
\end{equation*}
where $k(u) = \frac{2}{\sqrt{-2\log u}}\inv{\sqrt{2\pi}} \sqrt{\frac{1+\rho}{1-\rho}}$;{\adb
\begin{align*}
 & = k(u)e^{-\inv{2}\left(\frac{1-\rho}{1+\rho}\right)y^2(u)\left(1+O\left(\frac{\log|\log u|}{(\log u)^2}\right)\right)^2}\\
 &= k(u)e^{-\inv{2}\left(\frac{1-\rho}{1+\rho}\right)y^2(u)\left(1+O\left(\frac{\log|\log u|}{(\log u)^2}\right)\right)}\\
 & =k(u) e^{-\inv{2}\left(\frac{1-\rho}{1+\rho}\right)\left(y^2(u)+o(1)\right)}
\end{align*}}
as $u\to 0^+$, since $y^2(u)O\left(\frac{\log|\log u|}{(\log u)^2}\right) = O\left(\frac{\log|\log u|}{|\log u|}\right)$, since $y^2(u)\sim -2\log u$ as $u\to 0^+$. Thus 
\begin{equation*}
\lambda(u) \sim k(u)e^{-\inv{2}\left(\frac{1+\rho}{1-\rho}\right)y^2(u)}
\end{equation*}
and the right hand side simplifies to the right hand side of (\ref{normal slowly varying function}). 

In this note we present a simple result that allows us to evaluate the asymptotic order of the difference between $y(u)$ and $h(u)$ as $u\to 0^+$ or $1^-$, where $h(u)$ is the quantile function corresponding to a cumulative distribution function $g(\cdot)$ for important distributions where $h(u)$ has no closed form, and $y(u)$ is an asymptotic closed form expression. This is the general result which we discuss in Section 2. 
In Section 3, we will illustrate our results by considering the quantile function for the Generalised Gamma-type tail which has various commonly used distributions such as Normal, Skew-Normal, Gamma, Variance-Gamma and a Skew-Slash as special cases. Detailed extreme value structure of such distributions is important in a financial mathematics context. These individual examples will be discussed in Section 4. 
%we will illustrate our results by using various distributions such as normal, skew normal and gamma. In Section 4, we propose an approximation to the lower quantile of the skew Variance-Gamma distribution. 
%We will conclude our findings in Section 4.

\section{Main Result}

Our main result is summarised into the following Theorem.
\begin{Theorem} \label{Thm: inverse}
Suppose that $g(x)$ is a strictly positive continuous and strictly increasing cumulative distribution function (cdf) on $(-\infty, A]$, $A<0$. Suppose further that some function $y(u) \to -\infty$ as $u\to 0^+$ satisfies 
\begin{equation}
y(g(x)) = x\left(1+O\left(\zeta(x)\right)\right) \label{Thm: inverse: cdf pdf related}
\end{equation}
as $x\to -\infty$, such that $\zeta(x)  \to  0, x \to -\infty.$  Suppose finally that $\zeta(x) = \psi\left(-\inv{x}\right)$ with 
\begin{equation*}
\psi(w) = w^{\rho} L(w) 
\end{equation*}
for some constant $\rho \geq0$ and function $L(w)$, $w>0$, slowly varying at 0. 
%monotone increasing slowly varying function $L(x)$ at $0$,
If $h(u)$, $u\in (0, g(A)]$ is the inverse function of $g(\cdot)$, then 
\begin{equation*}
h(u) = y(u) \left(1+O(\zeta(y(u))\right). 
\end{equation*}
\end{Theorem}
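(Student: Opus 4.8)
The plan is to start from the given relation (\ref{Thm: inverse: cdf pdf related}) and substitute $x = h(u)$, exploiting the fact that $h$ inverts $g$. First I would note that since $g$ is a strictly increasing continuous cdf on $(-\infty, A]$ which is strictly positive, we have $g(x) \to 0^+$ as $x \to -\infty$, and hence $h(u) = g^{-1}(u) \to -\infty$ as $u \to 0^+$; in particular $\zeta(h(u)) \to 0$. Setting $x = h(u)$ in (\ref{Thm: inverse: cdf pdf related}) and using $g(h(u)) = u$ gives
\begin{equation*}
y(u) = h(u)\left(1 + O(\zeta(h(u)))\right),
\end{equation*}
which is almost the desired statement, except that the roles of $y$ and $h$ are interchanged and the error is measured at $h(u)$ rather than at $y(u)$.

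Second I would invert this relation. Writing $y(u) = h(u)(1 + r(u))$ with $|r(u)| \le C\,\zeta(h(u))$ for $u$ small and $r(u) \to 0$, the elementary expansion $1/(1+r) = 1 + O(r)$ yields
\begin{equation*}
h(u) = y(u)\left(1 + O(\zeta(h(u)))\right).
\end{equation*}
The same relation shows $y(u)/h(u) \to 1$, i.e.\ $h(u) \sim y(u)$ as $u \to 0^+$; equivalently $-1/h(u) \sim -1/y(u)$, with both quantities tending to $0^+$.

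The remaining step, which I expect to be the main obstacle, is to replace $\zeta(h(u))$ by $\zeta(y(u))$ in the error term, and this is exactly where the structural hypothesis $\zeta(x) = \psi(-1/x)$ with $\psi$ regularly varying at $0$ enters. Writing $a(u) = -1/y(u) \to 0^+$ and $c(u) = y(u)/h(u) \to 1$, so that $-1/h(u) = a(u)c(u)$, I would compute
\begin{equation*}
\frac{\zeta(h(u))}{\zeta(y(u))} = \frac{\psi(a(u)c(u))}{\psi(a(u))} = c(u)^{\rho}\,\frac{L(a(u)c(u))}{L(a(u))}.
\end{equation*}
Since $c(u) \to 1$ and $L$ is slowly varying at $0$, the uniform convergence theorem for slowly varying functions, applied with the multiplier $c(u)$ confined to a fixed compact neighbourhood of $1$ while the base point $a(u) \to 0$, gives $L(a(u)c(u))/L(a(u)) \to 1$, while $c(u)^{\rho} \to 1$; hence $\zeta(h(u)) \sim \zeta(y(u))$, so $O(\zeta(h(u))) = O(\zeta(y(u)))$. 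Substituting this into the relation from the second step yields the claimed expansion $h(u) = y(u)(1 + O(\zeta(y(u))))$. The delicate point is verifying that the argument ratio $c(u)$ stays in a compact set bounded away from $0$ and $\infty$ as $a(u) \to 0$, so that the slow variation of $L$ absorbs the perturbation uniformly; this is the sole place the regular variation of $\psi$ is used, and it is what upgrades the crude $\zeta(h(u))$ bound into the intended $\zeta(y(u))$ bound.
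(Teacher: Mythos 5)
Your proposal is correct and follows essentially the same route as the paper: substitute $x=h(u)$ into (\ref{Thm: inverse: cdf pdf related}) using $g(h(u))=u$, invert the resulting relation, deduce $h(u)/y(u)\to 1$, and invoke the uniform convergence theorem for slowly/regularly varying functions to replace $\zeta(h(u))$ by $\zeta(y(u))$. Your explicit factorisation $\psi(a(u)c(u))/\psi(a(u)) = c(u)^{\rho}L(a(u)c(u))/L(a(u))$ is merely a more detailed rendering of the paper's one-line appeal to the same theorem.
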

\begin{proof}
We begin with the fact that 
\begin{align*}
 g(h(u)) &= u \\
 \Rightarrow \quad y(g(h(u))) & = y(u) \\
 \Rightarrow \quad h(u)(1+O(\zeta(h(u)))) & = y(u), \quad \text{using (\ref{Thm: inverse: cdf pdf related})}
\end{align*}
Thus $\frac{h(u)}{y(u)} \to 1$ as $u\to 0^+$ and hence
\begin{equation}
\frac{\zeta(h(u))}{\zeta(y(u))} = \frac{\psi\left(-1/h(u)\right)}{\psi\left(-1/y(u)\right)} \to 1 
\label{Thm: correction term equivalency}
\end{equation}
by the Uniform Convergence Theorem of slowly varying function of \citet[Theorem 1.1]{S1976}. 

From (\ref{Thm: correction term equivalency}), $\frac{\zeta(h(u))}{\zeta(y(u))} \to 1 \Rightarrow O\left(\zeta(h(u))\right) = O\left(\zeta(y(u)\right)$ as $u\to 0^+$. 
Finally, 
\begin{align*}
& h(u)(1+O(\zeta(h(u))))  = y(u)\\
\Rightarrow \quad & h(u) = y(u)\left(1+O(\zeta(h(u)))\right)^{-1} \\
\Rightarrow \quad & h(u) = y(u)\left(1+O(\zeta(h(u)))\right) \\
\Rightarrow \quad & h(u) = y(u)\left(1+O(\zeta(y(u)))\right). 
\end{align*}
\end{proof}
 Note  that the formulation of the theorem requires, in the case $\rho=0$ only,  that the function $L(w) \to 0, w \to 0.$ 
% Notice also that in (\ref{Thm: inverse: cdf pdf related}), $y(g(\cdot))$  is  the ``reverse" of $ g(y(\cdot))$ in  (\ref{assumpn2}).

%Theorem \ref{Thm: inverse} allows us to translate how good $y(u)$ is at cancelling $g(x)$ into how close it is to $h(u)$ directly. 
The condition required in Theorem \ref{Thm: inverse} is simply that the correction term in (\ref{Thm: inverse: cdf pdf related}) is related to a regular varying function which is quite general and should apply to a wide range of distributions. The result in Theorem \ref{Thm: inverse} not only ensures $h(u)$ and $y(u)$ will be asymptotically equivalent, it will also stipulate how accurate $y(u)$ will be for $h(u)$.

We express as a corollary to Theorem \ref{Thm: inverse} the corresponding result for the upper tail quantiles. 
\begin{Corollary}
\label{Cor: inverse upper}
Suppose that $g(x)$ is a strictly positive continuous and strictly increasing cumulative distribution function (cdf) on $[A, \infty)$, $A>0$. Suppose further that some function $y(u) \to \infty$ as $u\to 1^-$ satisfies 
\begin{equation*}
y(g(x)) = x\left(1+O\left(\zeta(x)\right)\right) 
%\label{Cor: inverse: cdf pdf related upper}
\end{equation*}
as $x\to \infty$, such that $\zeta(x)  \to  0, x \to \infty.$  If  $\zeta(x) = \psi\left(\inv{x}\right)$ with 
\begin{equation*}
\psi(w) = w^{\rho} L(w) 
\end{equation*}
for some constant $\rho \geq 0$ and function $L(w)$, $w>0$, slowly varying at 0. 
%monotone increasing slowly varying function $L(x)$ at $0$, and
If $h(u)= g^{-1}(u), u\in [g(A), 1)$ is the inverse function of $g(\cdot)$ i.e. the upper tail quantile function then 
\begin{equation*}
h(u) = y(u) \left(1+O(\zeta(y(u))\right). \label{Cor: inverse: inverse accuracy upper}
\end{equation*}
\end{Corollary}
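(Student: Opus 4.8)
The plan is to deduce Corollary \ref{Cor: inverse upper} from Theorem \ref{Thm: inverse} by a reflection argument, so that none of the slowly-varying machinery need be redone. Given the upper-tail data $(g, y, \zeta)$, I would introduce the reflected cdf $\bar g(t) = 1 - g(-t)$ on $(-\infty, -A]$. Since $A>0$ we have $-A<0$, and because $g$ is continuous, strictly increasing, and satisfies $g(x)<1$ for finite $x$, the function $\bar g$ is a strictly positive, continuous, strictly increasing cdf on an interval of exactly the form required by Theorem \ref{Thm: inverse}. Thus the structural hypotheses transfer at once.

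Next I would transport $y$ and $\zeta$ through the reflection. Setting $t=-x$ and $\bar y(v) = -y(1-v)$, the identity $1-\bar g(t) = g(-t)$ converts the upper-tail relation $y(g(x)) = x(1+O(\zeta(x)))$, $x\to\infty$, into $\bar y(\bar g(t)) = t(1+O(\bar\zeta(t)))$ as $t\to-\infty$, where $\bar\zeta(t) = \zeta(-t)$. One then checks that $\bar y(v)\to-\infty$ as $v\to 0^+$ (from $y(u)\to\infty$ as $u\to 1^-$) and, crucially, that $\bar\zeta(t) = \zeta(-t) = \psi(1/(-t)) = \psi(-1/t)$ is precisely of the form $\psi(-1/t)$ demanded in Theorem \ref{Thm: inverse}, with the \emph{same} $\psi(w)=w^{\rho}L(w)$ and the same $L$ slowly varying at $0$. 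Hence Theorem \ref{Thm: inverse} applies verbatim to $(\bar g, \bar y, \bar\zeta)$ and yields $\bar h(v) = \bar y(v)(1+O(\bar\zeta(\bar y(v))))$ for the inverse $\bar h$ of $\bar g$.

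Finally I would translate the conclusion back. From $\bar g(t) = 1 - g(-t)$ the inverses satisfy $\bar h(v) = -h(1-v)$, and with $v = 1-u$ one computes $\bar y(1-u) = -y(u)$ together with $\bar\zeta(\bar y(1-u)) = \bar\zeta(-y(u)) = \zeta(y(u))$. Substituting these into $\bar h(v) = \bar y(v)(1+O(\bar\zeta(\bar y(v))))$ and cancelling the common sign gives $h(u) = y(u)(1+O(\zeta(y(u))))$ as $u\to 1^-$, which is the assertion.

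I expect the only genuine obstacle to be the sign bookkeeping in the substitution, specifically confirming that reflecting $x\mapsto -x$ carries the argument $1/x$ of $\psi$ (with $x\to+\infty$) to $-1/t$ (with $t\to-\infty$), so that the hypothesis ``$\zeta(x)=\psi(-1/x)$'' of Theorem \ref{Thm: inverse} is matched with the identical $\psi$ and identical slowly varying $L$; everything else is mechanical. As a safeguard I could instead bypass the reflection entirely and repeat the three-line argument of Theorem \ref{Thm: inverse} directly in the upper tail, the sole change being that now $1/x\to 0^+$ as $x\to\infty$, so the Uniform Convergence Theorem for $L$ slowly varying at $0$ still delivers $\zeta(h(u))/\zeta(y(u))\to 1$ and hence $O(\zeta(h(u))) = O(\zeta(y(u)))$.
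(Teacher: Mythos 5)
Your proposal is correct. The paper itself gives no proof of Corollary \ref{Cor: inverse upper}; it is stated as an immediate companion to Theorem \ref{Thm: inverse}, the implicit argument being your ``safeguard'' route --- rerun the three-line chain $g(h(u))=u \Rightarrow y(g(h(u)))=y(u) \Rightarrow h(u)(1+O(\zeta(h(u))))=y(u)$ in the upper tail, where now $1/h(u)$ and $1/y(u)$ tend to $0^+$ and the Uniform Convergence Theorem for $L$ slowly varying at $0$ again gives $\zeta(h(u))/\zeta(y(u))\to 1$. Your primary route via the reflection $\bar g(t)=1-g(-t)$, $\bar y(v)=-y(1-v)$, $\bar\zeta(t)=\zeta(-t)$ is a genuine alternative: it derives the corollary \emph{from} the theorem rather than in parallel with it, and your sign bookkeeping checks out --- $\bar g$ is a strictly positive, continuous, strictly increasing cdf on $(-\infty,-A]$ with $-A<0$ because $g(x)<1$ for finite $x$; $\bar\zeta(t)=\psi(1/(-t))=\psi(-1/t)$ matches the hypothesis $\zeta(x)=\psi(-1/x)$ of Theorem \ref{Thm: inverse} with the identical $\psi$ and $L$; and $\bar h(v)=-h(1-v)$ converts the conclusion back correctly. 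What the reduction buys is that nothing about slow variation needs to be re-examined; what the direct rerun buys is brevity and the absence of auxiliary objects. Either is acceptable, and both implicitly inherit the paper's side condition that when $\rho=0$ one needs $L(w)\to 0$ as $w\to 0$.
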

In the next section, we will illustrate our results by considering the quantile function corresponding to a cdf that has a Generalised Gamma-type tail behaviour.
%illustrate our results with a few examples.\\

\section{Quantile for Generalised Gamma-type tail behaviour}
\subsection{Lower Tail}

We are interested in approximation to the quantile function for the Generalised Gamma-type tail.  We consider a cdf $g$ to have a Generalised Gamma-type (lower) tail behaviour if $g$ can be expressed as 
%Suppose  that
\begin{equation}
g(x) = a|x|^b e^{-c|x|^d}\left(1+O\left(\inv{|x|^e}\right)\right), \quad x<0, \label{GG tail behaviour}
\end{equation}
for some constants $a$, $c$, $d$, $e>0$ and $b\in \R$. 
Several   distributions of current interest  have such tails, and we discuss them as  special cases in the next section.

Suppose that an approximation to the quantile function $h(u) = g^{-1}(u) $  to  be 
\begin{eqnarray}
y(u) = -\left\{ \frac{-b}{cd}\left[\log\left(\frac{ \frac{cd}{|b|} \left(\frac{u}{a}\right)^{\frac{d}{b}}}{\left| \log\frac{cd}{|b|}  u^{\frac{d}{b}}\right|}\right)\right] \right\}^{\inv{d}}, \quad \text{for  small $u>0$.}  \label{GG quantile approximation}
\end{eqnarray}
This  can be obtained via the recursive method for an inverse function (see Chapter 2.4 of \cite{D1961} for instance) on $g(\cdot)$. 
%It is obvious that  $y_1(u) \sim y_2(u)$ so the question is how 
Then {\adb
\begin{eqnarray*}
-y(g(x)) &=& \left\{ \frac{-b}{cd}\left[\log\left(\frac{\frac{cd}{|b|}\left(\frac{a|x|^b e^{-c|x|^d}\left(1+O\left(\inv{|x|^e}\right)\right)}{a}\right)^{\frac{d}{b}}}{\left|\log\frac{cd}{|b|}\left(a|x|^b e^{-c|x|^d}\left(1+O\left(\inv{|x|^e}\right)\right)\right)^{\frac{d}{b}}\right|}\right)\right]\right\}^{\inv{d}}\\
&=& \left\{ \frac{-b}{cd}\left[ \log\left(\frac{\frac{cd}{|b|}|x|^d e^{-\frac{cd}{b}|x|^d}\left(1+O\left(\inv{|x|^e}\right)\right)}{\left|\log\frac{cda^{\frac{d}{b}}}{|b|}|x|^d e^{-\frac{cd}{b}|x|^d}\left(1+O\left(\inv{|x|^e}\right)\right)\right|}\right)\right]\right\}^{\inv{d}}\\
&=& \Biggl\{ \frac{-b}{cd}\Biggl[ - \frac{cd}{b}|x|^d+
\log\left(\frac{cd}{|b|}|x|^d\right)+\log\left(1+O\left(\inv{|x|^e}\right)\right)\\
&& \quad -\log\left|- \frac{cd}{b}|x|^d+\log\left(\frac{cda^{\frac{d}{b}}}{|b|}|x|^d\right)+ \log\left(1+O\left(\inv{|x|^e}\right)\right)\right|\Biggr]\Biggr\}^{\inv{d}}\\
&=& \Biggl\{ -\frac{b}{cd}\Biggl[- \frac{cd}{b}|x|^d+ \log\left(\frac{cd}{|b|}|x|^d\right) +O\left(\inv{|x|^e}\right) -\log\left(\frac{cd}{|b|}|x|^d\right)\\
&& \quad - \log\left(1- \frac{\log\left(\frac{cda^{\frac{d}{b}}}{|b|}|x|^d\right)+O\left(\inv{|x|^e}\right)}{\frac{cd}{b}|x|^d}\right)\Biggr]\Biggr\}^{\inv{d}}\\
&=& \left\{\frac{-b}{cd}\left[-\frac{cd}{b}|x|^d + O\left(\frac{\log |x|}{|x|^d}\right) + O\left(\inv{|x|^{e}}\right)\right]\right\}^{\inv{d}}\\
&=& |x|\left(1+O\left(\inv{|x|^{e+d}}\right)+O\left(\frac{\log|x|}{|x|^{2d}}\right)\right)\\
&=& 
\begin{cases}
|x|\left(1+ O\left(\frac{\log|x|}{|x|^{2d}}\right)\right), & \text{if $d\leq e$} \\
|x|\left(1+O\left(\inv{|x|^{d+e}}\right)\right), &\text{otherwise.}
\end{cases}
\end{eqnarray*}}
As $y(u)\sim - \left(-\inv{c}\log u\right)^{\inv{d}})$, the behaviour of the quantile function $h(\cdot)$ is
\begin{equation}
h(u) = \begin{cases}
y(u)\left(1+O\left(\frac{\log\left|\left(-\inv{c}\log u\right)^{\inv{d}}\right|}{(\log u)^2}\right)\right) = y(u)\left(1+O\left(\frac{\log\left|\log u\right|}{(\log u)^2}\right)\right), & \text{if $d\leq e$}\\
y(u)\left(1+O\left(\inv{|\log u|^{\frac{e}{d}+1}}\right)\right), & \text{otherwise},
\end{cases} \label{h general}
\end{equation}
by Theorem \ref{Thm: inverse}.

We next expand $y(u)$ to obtain an expansion for $h(u)$ with an error term of appropriate lower order. After some algebra, {\adb
\begin{eqnarray*}
 y(u) %&=& -\left\{ -\frac{b}{cd}\left[\frac{d}{b}\log\left(\frac{u}{a}\right)+\log\left(\frac{cd}{|b|}\right) -\log\left|\frac{d}{b}\log\left(\frac{u}{a}\right)+\log\left(\frac{cd}{|b|}\right)\right|\right]\right\}^{\inv{d}}\\
&=&  -\left(-\inv{c}\log u \right)^{\inv{d}}\Biggl[1- \frac{b\log|\log u|}{d^2\log u} - \frac{b\log\left(\frac{a^{\frac{d}{b}}}{c}\right)}{d^2\log u} + \frac{b^2}{2d^3}\left(\inv{d}-1\right)\frac{(\log|\log u|)^2}{(\log u)^2} \\
&& \quad +\frac{b^2}{d^3}\left(\inv{d}-1\right)\log\left(\frac{a^{\frac{d}{b}}}{c}\right)\frac{\log|\log u|}{(\log u)^2}+ \frac{\frac{b^2}{d^3} \left[\left(\inv{2d}-\inv{2}\right) \left(\log\left(\frac{a^{\frac{d}{b}}}{c}\right)\right)^2- \log\left(\frac{cd}{|b|}\right)\right]}{(\log u)^2}\\
&& \quad + O\left(\frac{(\log|\log u|)^3}{(|\log u|)^3}\right) \Biggr].
%&=& -\left(-\inv{c}\log\left(\frac{u}{a}\right)\right)^{\inv{d}}\left[1-\frac{b\log\left|\inv{c}\log\left(\frac{u}{a}\right)\right|}{d^2\log\left(\frac{u}{a}\right)} +O\left(\inv{(\log u)^2}\right)\right].
\end{eqnarray*}}
As a result, when $d\leq e$, $h(\cdot)$ becomes {\adb
\begin{align}
\notag  h(u) = &   y(u)\left(1+O\left(\frac{\log\left|\log u\right|}{(\log u)^2}\right)\right) \\
%= & -\left(-\inv{c}\log\left(\frac{u}{a}\right)\right)^{\inv{d}}\Biggl[1-\frac{b\log\left|\inv{c}\log\left(\frac{u}{a}\right)\right|}{d^2\log\left(\frac{u}{a}\right)} +\frac{\left(\inv{2d}-\inv{2}\right)(\log|\inv{c}\log\left(\frac{u}{a}\right)|)^2- \log\left(\frac{cd}{|b|}\right)}{d\left(\frac{d}{b}\log\left(\frac{u}{a}\right)\right)^2}\\
%& \quad + O\left(\frac{(\log|\log u|)^3}{(|\log u|)^3}\right) \Biggr]\times \left(1+O\left(\frac{\log\left|\log u\right|}{(\log u)^2}\right)\right)\\
%= & 
%-\left(-\inv{c}\log\left(\frac{u}{a}\right)\right)^{\inv{d}}\Biggl[1-\frac{b\log\left|\inv{c}\log\left(\frac{u}{a}\right)\right|}{d^2\log\left(\frac{u}{a}\right)} +\frac{\left(\inv{2d}-\inv{2}\right)(\log|\inv{c}\log\left(\frac{u}{a}\right)|)^2}{d\left(\frac{d}{b}\log\left(\frac{u}{a}\right)\right)^2}+ O\left(\frac{\log\left|\log u\right|}{(\log u)^2}\right)\Biggr]\\
\notag = &-\left(-\inv{c}\log u\right)^{\inv{d}} - \frac{b\log|\log u|}{cd^2\left(-\inv{c}\log u\right)^{1-\inv{d}}} - \frac{b\log\left(\frac{a^{\frac{d}{b}}}{c}\right)}{cd^2(-\inv{c}\log u)^{1-\inv{d}}}\\
& \quad - \frac{b^2\left(\inv{2d}-\inv{2}\right)(\log|\log u|)^2}{c^2d^3(-\inv{c}\log u)^{2-\inv{d}}}+ O\left(\frac{\log|\log u|}{|\log u|^{2-\inv{d}}}\right); \label{h(u) expansion d <= e}
\end{align}}
%[If we compare this to the result in [2017-03-07], we have an extra term here.]

On the other hand, when $d>e$, $h(\cdot)$ becomes {\adb
\begin{align*}
\notag  h(u) = &  y(u)\left(1+O\left(\inv{|\log u|^{\frac{e}{d}+1}}\right)\right) \\
%= & -\left(-\inv{c}\log\left(\frac{u}{a}\right)\right)^{\inv{d}}\Biggl[1-\frac{b\log\left|\inv{c}\log\left(\frac{u}{a}\right)\right|}{d^2\log\left(\frac{u}{a}\right)} +\frac{\left(\inv{2d}-\inv{2}\right)(\log|\inv{c}\log\left(\frac{u}{a}\right)|)^2- \log\left(\frac{cd}{|b|}\right)}{d\left(\frac{d}{b}\log\left(\frac{u}{a}\right)\right)^2}\\
%& \quad + O\left(\frac{(\log|\log u|)^3}{(|\log u|)^3}\right) \Biggr]\times \left(1+O\left(\inv{|\log u|^{\frac{e}{d}+1}}\right)\right)\\ 
%= & -\left(-\inv{c}\log\left(\frac{u}{a}\right)\right)^{\inv{d}}\Biggl[1-\frac{b\log\left|\inv{c}\log\left(\frac{u}{a}\right)\right|}{d^2\log\left(\frac{u}{a}\right)} +O\left(\inv{|\log u|^{\frac{e}{d}+1}}\right)\Biggr], \, \text{as $d>e \Rightarrow \frac{e}{d}<1 \Rightarrow \frac{e}{d}+1<2$}\\
\notag = & -\left(-\inv{c}\log u\right)^{\inv{d}} - \frac{b\log|\log u|}{cd^2\left(-\inv{c}\log u\right)^{1-\inv{d}}} 
- \frac{b\log\left(\frac{a^{\frac{d}{b}}}{c}\right)}{cd^2\left(-\inv{c}\log u\right)^{1-\inv{d}}}\\
& \quad + O\left(\frac{1}{|\log u|^{\frac{e}{d}+1-\inv{d}}}\right). 
%\label{h(u) expansion d > e}
\end{align*}}
\subsection{Upper Tail}

Similarly, a cdf $g(\cdot)$ is said to have a Generalised Gamma-type (upper) tail behaviour if $g$ can be expressed as 
\begin{equation}
g(x) = 1- ax^b e^{-cx^d}\left(1+O\left(\inv{x^e}\right)\right), \quad x \to \infty, \label{GG tail behaviour upper}
\end{equation}
for some constants $a$, $c$, $d$, $e>0$ and $b\in \R$, which would  suggest the following approximation to the upper tail  quantile function: 
\begin{equation}
y(u) =  \left\{ \frac{-b}{cd}\left[\log\left(\frac{ \frac{cd}{|b|} \left(\frac{(1-u)}{a}\right)^{\frac{d}{b}}}{\left| \log\frac{cd}{|b|} (1-u)^{\frac{d}{b}}\right|}\right)\right] \right\}^{\inv{d}}, \quad \text{as  $u\to 1^{-}$.}  \label{GG quantile approximation upper}
\end{equation}
Then by Corollary \ref{Cor: inverse upper}, the upper tail quantile function $h(\cdot)$ can be expressed as {\adb
\begin{align}
\notag  & h(u) \\
\notag  =& \begin{cases}
 y(u)\left(1+O\left(\frac{\log\left|\log (1-u)\right|}{(\log(1- u))^2}\right)\right), & \text{if $d\leq e$;}\\
y(u)\left(1+O\left(\inv{|\log (1-u)|^{\frac{e}{d}+1}}\right)\right), & \text{if $d<e$};
\end{cases} \\
=& 
\begin{cases}
\left(-\inv{c}\log(1-u)\right)^{\inv{d}} + \frac{b\log|\log(1-u)|}{cd^2\left(-\inv{c}\log(1-u)\right)^{1-\inv{d}}} + \frac{b\log\left(\frac{a^{\frac{d}{b}}}{c}\right)}{cd^2(-\inv{c}\log(1-u))^{1-\inv{d}}}\\
\quad + \frac{b^2\left(\inv{2d}-\inv{2}\right)(\log|\log(1-u)|)^2}{c^2d^3(-\inv{c}\log(1-u))^{2-\inv{d}}}+ O\left(\frac{\log|\log(1-u)|}{|\log(1-u)|^{2-\inv{d}}}\right), & \text{if $d\leq e$};\\
\left(-\inv{c}\log(1-u)\right)^{\inv{d}} + \frac{b\log|\log(1-u)|}{cd^2\left(-\inv{c}\log(1-u)\right)^{1-\inv{d}}} 
+\frac{b\log\left(\frac{a^{\frac{d}{b}}}{c}\right)}{cd^2\left(-\inv{c}\log(1-u)\right)^{1-\inv{d}}}\\
\quad + O\left(\frac{1}{|\log (1-u)|^{\frac{e}{d}+1-\inv{d}}}\right), & \text{if $d>e$}.
\end{cases}
%\\
% =&
%\begin{cases}
%\left(-\inv{c}\log\left(\frac{1-u}{a}\right)\right)^{\inv{d}} + \frac{b\log|\inv{c}\log\left(\frac{1-u}{a}\right)|}{cd^2\left(-\inv{c}\log\left(\frac{1-u}{a}\right)\right)^{1-\inv{d}}}\\
%\quad  + \frac{b^2\left(\inv{2d}-\inv{2}\right)\left(\log|\inv{c}\log\left(\frac{1-u}{a}\right)|\right)^2}{c^2d^3\left(-\inv{c}\log\left(\frac{1-u}{a}\right)\right)^{2-\inv{d}}}  + O\left(\frac{\log|\log (1-u)|}{|\log(1-u)|^{2-\inv{d}}}\right), &  \text{if $d\leq e$}; \\
%\left(-\inv{c}\log\left(\frac{1-u}{a}\right)\right)^{\inv{d}} + \frac{b\log\left|\inv{c}\log\left(\frac{1-u}{a}\right)\right|}{cd^2\left(-\inv{c}\log\left(\frac{1-u}{a}\right)\right)^{1-\inv{d}}} + O\left(\frac{1}{|\log (1-u)|^{\frac{e}{d}+1-\inv{d}}}\right), & \text{if $d>e$}.
%\end{cases}
\label{h general upper}
\end{align}}
as $u \to 1^-$.

\section{Applications}

In this section, we will illustrate our results with  application to  the Normal, Skew-Normal, Gamma, Variance-Gamma and Skew-Slash distributions.
\subsection{ Standard Normal}

From \cite{F1968} Chapter VII Lemma 2, the form of $g(\cdot)$ is 
%(the following will be incorporated into a new Corollary 1 if the updated Theorem 1 holds): when $\lambda_1=0$ we have 
\begin{equation*}
g(x) = \inv{\sqrt{2\pi}|x|}e^{-\frac{x^2}{2}}\left(1+O\left(\inv{x^2}\right)\right), \quad \text{for $x<1$}.
%\quad \text{and}\quad y(u) = -\sqrt{-2\log u\sqrt{-4\pi \log u}},
\end{equation*}
If we compare this with (\ref{GG tail behaviour}), we have $a= \inv{\sqrt{2\pi}}$, $b = -1$, $c = \inv{2}$ and  $d= e = 2$. This means that the approximation $y(\cdot)$ becomes: 
\begin{equation}
y(u) = -\left\{ \log\left(\frac{(\sqrt{2\pi} u)^{-2}}{\left| \log(u) ^{-2}\right|}\right)\right\}^{\inv{2}} = - \sqrt{-2\log \left(u\sqrt{4\pi|\log u|}\right)} \label{approx normal quantile}
\end{equation}
and is the same as (\ref{eqnA}). As in this case we have $d=e$, we can obtain the behaviour of the quantile function by using using (\ref{h general}) and (\ref{h(u) expansion d <= e}) and get 
\begin{align*}
h(u) = & y(u)\left(1+O\left(\frac{\log|\log u|}{\left(\log u\right)^2}\right)\right) \\
= & - \sqrt{-2\log u} + \frac{\log\left|\log u\right|}{2\sqrt{-2\log u}} + \frac{\log 4\pi}{2\sqrt{-2\log u}}\\
& + \frac{\left(\log|\log u|\right)^2}{8\left(-2\log u\right)^{\frac{3}{2}}} + O\left(\frac{\log|\log u|}{|\log u|^{\frac{3}{2}}}\right).
\end{align*}
If  we use only the dominating term of $y(u)$ in (\ref{approx normal quantile}) and so  put 
\begin{equation*}
y^*(u) = -\sqrt{-2\log u}
\end{equation*}
then after some algebra we have 
%\subsection{The difference between using $-\sqrt{-2\log u}$ and $-\sqrt{-2\log u \sqrt{-4\pi\log u}}$ for normal}
%
%Suppose that $F_1(x) = \inv{\sqrt{2\pi}|x|}e^{-\frac{x^2}{2}}\left(1+O\left(\inv{x^2}\right)\right)$ and $y(u) = -\sqrt{-2\log u}$ (so this $y(u)$ only takes the dominating term of the one we used in the previous section). Then 
%\begin{eqnarray*}
%-y(F_1(x)) &=& \left(-2\log\inv{\sqrt{2\pi}|x|}e^{-\frac{x^2}{2}}\left(1+O\left(\inv{x^2}\right)\right)\right)^{\inv{2}}\\
%&=& \left[-2\left(-\frac{x^2}{2} - \log\sqrt{2\pi}|x| + \log\left(1+O\left(\inv{x^2}\right)\right)\right)\right]^{\inv{2}}\\
%&=& \left[x^2+2\log\sqrt{2\pi}|x|+ O\left(\inv{x^2}\right)\right]^{\inv{2}}\\
%&=& |x|\left[ 1+ \frac{2\log\sqrt{2\pi}|x|}{x^2} + O\left(\inv{x^4}\right)\right]^{\inv{2}}.
%\end{eqnarray*}
%As a result, we have 
\begin{equation*}
y(g(x)) = x(1+O(\zeta(x)))\quad \text{where} \quad \zeta(x) = \frac{\log |x|}{x^2}. 
\end{equation*}
As a result, 
\begin{equation*}
h(u) = y^*(u)\left(1+ O\left(\frac{\log|\log u|}{\log u}\right)\right)
\end{equation*}
by using Theorem \ref{Thm: inverse}.  We can see that $y^*(u)$ is a less accurate approximation to $h(u)$ than $y(u),$ and the expression
for $h(u)$ that it gives only improves on   $y^*(u)$ by giving the correct order of the difference $h(u) -  y^*(u)$.

We are aware that there exists a whole field of literature on finding an efficient and accurate approximation in the numerical sense for the Normal quantile functions, such as \cite{AS1964}, \cite{BS1977} to the more recent \cite{V2010}. \cite{SE2014} provide a substantial bibliography on this subject. But that is not our focus and so our methodology will most likely be outperformed by the more sophisticated approximation in the literature. Take the approximation to the Normal quantile in Section 2.2.1 in \cite{V2010} for example which gives
\begin{equation*}
y_V(u) = c_3\sqrt{-2\log u} + c_2' + \frac{c_1'\sqrt{-2\log u}+c_0'}{-2\log u+d_1\sqrt{-2\log u}+d_0}
\end{equation*}
for $e^{-37^2/2} <u<  0.0465$ where 
\begin{align*}
%c_0 &= 16.896201479841517652,\\
c_3 &=-1.000182518730158122,\\
c_0' &= 16.682320830719986527, \\
c_1' &= 4.120411523939115059, \\
c_2' &= 0.029814187308200211,\\ 
d_0 &= 7.173787663925508066, \\
d_1 &= 8.759693508958633869.
\end{align*}
To see how the expansions perform against the approximations, we plot the standard Normal quantile function, denoted as $\Phi^{-1}(u)$, in {\tt R} against $y_V(\cdot)$, $y(\cdot)$ and $y^{*}(\cdot)$ and the results are shown in Fig. \ref{quantile comp}. 

\begin{figure}[h]
\begin{center}
\scalebox{0.5}{\includegraphics{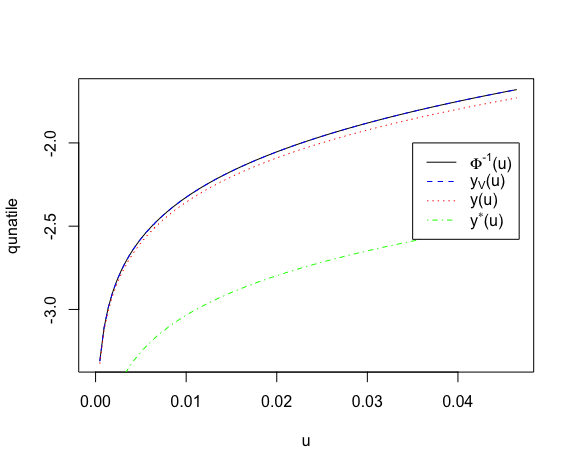}}
\caption{Comparison in {\tt R} of $\Phi^{-1}(u)$, $y_V(\cdot)$, $y(\cdot)$ and $y^*(\cdot)$}\label{quantile comp}
\end{center}
\end{figure}

From Fig. \ref{quantile comp}, we can see that $h_V(\cdot)$ gives almost identical result to the built-in Normal quantile function in {\tt R} and is better than our expansion $y(u)$. We also see that $y^*(\cdot)$ is an ``order'' worse than the other methods.

\subsection{Skew-Normal}
 The quantile function of the Skew-Normal distribution is another example that is covered by our result. The distribution was first introduced by \cite{A1985} and has developed into an extensive theory presented in a recent monograph by  \cite{AC2014}. A random variable is said to have a standard Skew-Normal distribution if its density function is
\begin{equation*}
f(x) = \frac{2}{\sqrt{2\pi}}e^{-\inv{2}x^2}\Phi(\lambda x), \quad x\in \R
\end{equation*}
where $\lambda\in R$ controls the skewness of the distribution. When $\lambda=0$, it reduces to the standard Normal as special case so we shall exclude the case of $\lambda=0$ from our subsequent discussion. Using Lemma 2 of  \cite{C2010} or \citet[pp. 52--53]{AC2014}, we have 
\begin{equation}
g(x)  = 
\begin{cases}
\inv{\pi\lambda(1+\lambda^2)|x|^2}e^{-\inv{2}(1+\lambda^2)x^2}\left(1+O\left(\inv{x^2}\right)\right), & \lambda>0;\\
\frac{2}{\sqrt{2\pi}|x|}e^{-\inv{2}x^2}\left(1+O\left(\inv{x^2}\right)\right), &  \lambda<0,
%\inv{\sqrt{2\pi}|z|} e^{-\inv{2}z^2}\left(1-\inv{z^2}\right)\left(1+O(\inv{z^4})\right), & \lambda_1=0; \\
%\inv{\pi} e^{-\inv{2}(1+\lambda_1^2)z^2} \inv{\lambda_1(1+\lambda_1^2)z^2}\left(1-\inv{\lambda_1^2 z^4}\right), & \lambda_1>0; \\
%\frac{2}{\sqrt{2\pi}}\inv{|z|}e^{-\frac{z^2}{2}}\left(1-\sqrt{\frac{2}{\pi}}\inv{|\lambda_1|(1+\lambda_1^2)}\inv{|z|}e^{-\frac{z^2\lambda_1^2}{2}} +O\left(\inv{z^2}\right)\right), & \lambda_1<0,
\end{cases} \quad \text{$x\to -\infty$}.
%& = c_1 |z|^{-c_2} f_1(z)\left(1+O\left(|z|^{-c_3}\right)\right) 
\label{skew normal g}
\end{equation}
This means if we compare (\ref{skew normal g}) with (\ref{GG tail behaviour}), we have 
$a = \inv{\pi\lambda(1+\lambda^2)}$, $b = -2$, $c = (1+\lambda^2)/2$ and $d = e = 2$ when $\lambda>0$; $a = 2/\sqrt{2\pi}$, $b = -1$, $c= 1/2$, $d = e = 2$ when $\lambda<0$.  This means that the approximation $y(\cdot)$ becomes:
\begin{align}
\notag y(u) =&  \begin{cases} 
- \left\{ \frac{2}{(1+\lambda^2)}\left[\log\left(\frac{\frac{(1+\lambda^2)}{2}\left(u \pi\lambda(1+\lambda^2)\right)^{-1}}{\left|\log\frac{(1+\lambda^2)}{2}\left(u\right)^{-1}\right|}\right)\right]\right\}^{\inv{2}},  & \lambda>0;\\
- \left\{ \log\left(\frac{(u\sqrt{2\pi}/2)^{-2}}{\left|\log\left(u\right)^{-2}\right|}\right)\right\}^{\inv{2}}, & \lambda<0;
\end{cases}\\
= & \begin{cases}
 -\sqrt{ - \frac{2}{1+\lambda^2}\log(- 2\pi\lambda u \log(2u/(1+\lambda^2)))}
 %\sim -\sqrt{-\frac{2}{1+\lambda^2}\log u}
, & \text{if $\lambda>0$;}\\
 -\sqrt{-2\log\left(\frac{u}{2}\sqrt{-4\pi \log u})\right)}
% \sim -\sqrt{-2\log u}
, & \text{if $\lambda<0$.}
\end{cases} \label{approx skew normal quantile}
\end{align}

As in this case we have $d = e$, we can obtain the behaviour of the quantile function by using (\ref{h general}) and (\ref{h(u) expansion d <= e}) to get {\adb

%Theorem 1 of \citet{FS2016} has already proposed an asymptotic form for the quantile of the skew normal distribution by utilising the inverse of the Lambert $W$ function. See   and is in the same order to the approximation provided in (\ref{approx skew normal quantile}). As in this case we have $d = e$, we can obtain the behaviour of the quantile function by using (\ref{h general}) and (\ref{h(u) expansion d <= e}) to get {\adb
\begin{align}
h(u) = & y(u) \left(1+O\left(\frac{\log|\log u|}{(\log u)^2}\right)\right)  \label{approx skew normal quantile 2}\\
\notag = & \begin{cases}
-\sqrt{-\frac{2}{1+\lambda^2}\log u} + \frac{\log \left| \log u\right|}{(1+\lambda^2)\sqrt{-\frac{2}{1+\lambda^2}\log u}} + \frac{\log(2\pi\lambda)}{(1+\lambda^2)\sqrt{-\frac{2}{1+\lambda^2}\log u}}\\
\quad + \frac{\left(\log|\log u|\right)^2}{2(1+\lambda^2)^2\left(-\frac{2}{1+\lambda^2}\log u\right)^{\frac{3}{2}}}+O\left(\frac{\log|\log u|}{|\log u|^{\frac{3}{2}}}\right), & \lambda>0; \\
- \sqrt{-2\log u } + \frac{\log\left|\log u\right|}{2\sqrt{-2\log u}} + \frac{\log \pi}{2\sqrt{-2\log u}} \\
\quad + \frac{\left(\log|\log u|\right)^2}{8(-2\log u)^{\frac{3}{2}}} +O\left(\frac{\log|\log u|}{|\log u|^{\frac{3}{2}}}\right), & \lambda<0.
\end{cases}
\end{align}}

The expression (\ref{approx skew normal quantile 2}) justifies the use of expression (\ref{approx skew normal quantile}) above in Theorem 2 of \cite{FS2016}.
%This means that after some algebra, one can show that 
%\begin{equation*}
%y(g(x)) = x\left(1+O\left(\zeta(x)\right)\right)
%\end{equation*}
%for all $\lambda$ with $\zeta(x) = \frac{\log |x|}{x^4}$ and
%\begin{equation*}
%\psi(w) = \zeta\left(-\inv{w}\right) = w^4(-\log w) = w^4 L(w)
%\end{equation*}
%where $L(w)$ is slowly varying at 0. By using Theorem \ref{Thm: inverse}, we have 
%\begin{eqnarray*}
%h(u) &=& y(u)\left(1+ O\left(\frac{\log\sqrt{-\log u}}{(\sqrt{-\log u})^4}\right)\right)= y(u) \left(1+ O\left(\frac{\log(-\log u)}{(\log u)^2}\right)\right),
%\end{eqnarray*}
%as $O(y(u)) = \sqrt{-\log u}$ for all $\lambda$.\\

%This new Theorem 1 also seems to resolve one of the discrepancy from the Theorem 1 of Fung and Seneta (2011). If we use the combination of 
%$g(u) = \inv{\sqrt{2\pi}|x|}e^{-\frac{x^2}{2}}\left(1+O\left(\inv{x^2}\right)\right)$ and $-\sqrt{-2\log u}$ (i.e. only the dominating term), then 
%\begin{equation*}
%\frac{g(y(u))}{u} = \inv{u\sqrt{2\pi}\sqrt{-2\log u}}e^{-\inv{2}(-2\log u)}\left(1+O\left(\inv{\log u}\right)\right) = \inv{\sqrt{-4\pi\log u} } \left(1+O\left(\inv{\log u}\right)\right) \not \rightarrow 1,
%\end{equation*}
%and the condition for the Theorem 1 in Fung and Seneta (2011) is not satisfied. This means that one can argue that our new Theorem 1 is not only more accurate but it is also more general. 
%\\

\subsection{Gamma} 

 Using (\ref{h general upper}), one can also determine the accuracy of the gamma-like upper tail. Suppose that $X\sim \Gamma(\alpha, \beta)$ and let its pdf be 
\begin{equation*}
f(x) = \frac{\beta^{\alpha}}{\Gamma(\alpha)}x^{\alpha-1}e^{-\beta x}, \quad x>0.
\end{equation*} 
Then 
\begin{eqnarray}
%1-g(x)&=& \frac{\beta^{\alpha-1}}{\Gamma(\alpha)} x^{\alpha-1}e^{-\beta x}\left(1+O\left(\inv{x}\right)\right)\\
%\Rightarrow 
g(x) = 1- \frac{\beta^{\alpha-1}}{\Gamma(\alpha)} x^{\alpha-1}e^{-\beta x}\left(1+O\left(\inv{x}\right)\right),\label{gamma g}
\end{eqnarray}
as $x\to \infty$. This means that if we compare (\ref{gamma g}) with (\ref{GG tail behaviour upper}), we have 
$a = \frac{\beta^{\alpha-1}}{\Gamma(\alpha)}$, $b = \alpha-1$, $c = \beta$ and $d = e =1$. By using (\ref{GG quantile approximation upper}) an approximation to the upper quantile function when $u\to 1^{-}$ would be 
\begin{align*}
y(u) =& \left\{\frac{1-\alpha}{\beta}\left[\log\left(\frac{\frac{\beta}{|\alpha-1|}\left(\frac{1-u}{\beta^{\alpha-1}/\Gamma(\alpha)}\right)^{\inv{\alpha-1}}}{\left|\log \left(\frac{\beta}{|\alpha-1|}\left(1-u\right)^{\inv{\alpha-1}}\right)\right|}\right)\right]\right\}\\
=& -\inv{\beta}\log\left(\frac{(1-u)\Gamma(\alpha)}{\left|\log\left(\frac{(1-u) \beta^{\alpha-1}}{|\alpha-1|^{\alpha-1}}\right)\right|^{\alpha-1}}\right)
\end{align*}
As in this case we have $d = e$ again, we have for the upper quantile function
\begin{eqnarray*}
h(u) &=& y(u)\left(1+O\left(\frac{\log|\log(1-u)|}{(\log(1-u))^2}\right)\right) \\
&=&  -\inv{\beta}\log\left(\frac{(1-u)\Gamma(\alpha)}{\left|\log\left(\frac{(1-u) \beta^{\alpha-1}}{|\alpha-1|^{\alpha-1}}\right)\right|^{\alpha-1}}\right)\left(1+O\left(\frac{\log|\log(1-u)|}{(\log(1-u))^2}\right)\right) \\
&=& -\inv{\beta}\log\left(1-u\right) + \frac{\alpha-1}{\beta}\log\left|\log(1-u)\right| - \inv{\beta}\log\Gamma(\alpha)+ O\left(\frac{\log|\log(1- u)|}{|\log(1-u)|}\right)
\end{eqnarray*}
as $u\to 1^{-}$ by using (\ref{h general upper}). 

In  Section 4 of  \cite{FS2011}, via their Theorem 1, the authors obtained $h(u) = y(u)( 1 + o(1)),  \, u \to 1^{-}$  with $ y(u) =  -\inv{\beta}\log\left(1-u\right). $

\subsection{Variance-Gamma and Skew-Slash}

 Finally, we  propose an approximation to the lower tail quantile function of the Variance-Gamma (VG) and a Skew-Slash distribution, as they can have similar tail structure. A random variable $X$ is said to have a skew VG distribution introduced in \cite{MCC1998} and further studied in  \cite{S2003}, \cite{S2004} and  \cite{TS2006}, if X is defined by a Normal variance-mean mixture as $X|Y \sim N(\mu+\theta Y, \sigma^2 Y)$, where $\mu, \theta \in \R$ and $\sigma^2>0$. The distribution of $Y$ is $\Gamma\left(\inv{\nu}, \inv{\nu}\right)$ with $\nu>0$ such that $EY = 1$. When $\mu=0$ and $\sigma =1$, from \cite{S2003}, we have 
%
%\begin{equation*}
%f_X(x) = \frac{2 e^{\theta x}}{\sqrt{2\pi}\nu^{\inv{\nu}}\Gamma\left(\inv{\nu}\right)}\left(\frac{|x|}{\sqrt{\frac{2}{\nu}+\theta^2}}\right)^{\inv{\nu}-\inv{2}}K_{\inv{\nu}-\inv{2}}\left(|x|\sqrt{\frac{2}{\nu}+\theta^2}\right)
%\end{equation*}
%for $x\in \R$.
%
%Then from p.29 of the honour thesis (or 2.2.4) on p.30 of the PhD thesis that 
\begin{eqnarray}
\notag g(x) &=& \frac{|x|^{\inv{\nu}-1}e^{-\left(\sqrt{\frac{2}{\nu}+\theta^2}+\theta\right)|x|}}{\nu^{\inv{\nu}}\Gamma\left(\inv{\nu}\right)\left(\frac{2}{\nu}+\theta^2\right)^{\inv{2\nu}}\left(\sqrt{\frac{2}{\nu}+\theta^2}+\theta\right)}\left(1+O\left(\inv{|x|}\right)\right) \\
&=& a |x|^{\inv{\nu}-1}e^{-\left(\sqrt{\frac{2}{\nu}+\theta^2}+\theta\right)|x|}\left(1+O\left(\inv{|x|}\right)\right) \label{VG tail}
\end{eqnarray}
as $x\to -\infty$, where $a>0$ is  defined by the above.. By comparing with (\ref{GG tail behaviour}), $a$ is defined as above, $b = \inv{\nu}-1$, $c = \sqrt{\frac{2}{\nu}+\theta^2}+\theta$ and $d = e=1$ which in turn would suggest the following approximation to the lower quantile function: {\adb
\begin{eqnarray*}
y(u) &=& \inv{\sqrt{\frac{2}{\nu}+\theta^2}+\theta}\log\left(\frac{u}{a}\left(\frac{\sqrt{\frac{2}{\nu}+\theta^2}+\theta}{\left|\log\left[ u\left(\frac{\sqrt{\frac{2}{\nu}+\theta^2}+\theta}{|\inv{\nu}-1|}\right)^{\inv{\nu}-1}\right]\right|}\right)^{\inv{\nu}-1}\right) \\
&\sim& \frac{\log u}{\sqrt{\frac{2}{\nu}+\theta^2}+\theta}, 
\end{eqnarray*}}
as 
$u\to 0^+$ by using (\ref{GG quantile approximation}). We can then obtain the behaviour of the quantile function as  {\adb
%\begin{eqnarray*}
% -y(g(x)) %&=& \inv{\sqrt{\frac{2}{\nu}+\theta^2}+\theta}\log\left(\frac{a}{a |x|^{\inv{\nu}-1}e^{-\left(\sqrt{\frac{2}{\nu}+\theta^2}+\theta\right)|x|}\left(1+O\left(\inv{|x|}\right)\right)}\left(\frac{\left|\log\left(a |x|^{\inv{\nu}-1}e^{-\left(\sqrt{\frac{2}{\nu}+\theta^2}+\theta\right)|x|}\left(1+O\left(\inv{|x|}\right)\right)\right)\right|}{\sqrt{\frac{2}{\nu}+\theta^2}+\theta}\right)^{\inv{\nu}-1}\right)\\
%%&=& \inv{\sqrt{\frac{2}{\nu}+\theta^2}+\theta}\Biggl[\left(\sqrt{\frac{2}{\nu}+\theta^2}+\theta\right)|x|-\left(\inv{\nu}-1\right)\log |x| -\log\left(1+O\left(\inv{|x|}\right)\right)\\
%%&& \quad +\left(\inv{\nu}-1\right)\log\left(\frac{\left|-\left(\sqrt{\frac{2}{\nu}+\theta^2}+\theta\right)|x|+\left(\inv{\nu}-1\right)\log |x| +\log a\left(1+O\left(\inv{|x|}\right)\right)\right|}{\sqrt{\frac{2}{\nu}+\theta^2}+\theta}\right)\Biggr]\\
%%&=& \inv{\sqrt{\frac{2}{\nu}+\theta^2}+\theta}\Biggl[\left(\sqrt{\frac{2}{\nu}+\theta^2}+\theta\right)|x|-\left(\inv{\nu}-1\right)\log |x| -\log\left(1+O\left(\inv{|x|}\right)\right)\\
%%&& \quad + \left(\inv{\nu}-1\right)\log |x| + \left(\inv{\nu}-1\right)\log\left(\left|1-\frac{\left(\inv{\nu}-1\right)\log |x| + \log a\left(1+O\left(\inv{|x|}\right)\right)}{\left(\sqrt{\frac{2}{\nu}+\theta^2}+\theta\right)|x|}\right|\right)\\
%%&=& \inv{\sqrt{\frac{2}{\nu}+\theta^2}+\theta}\left[\left(\sqrt{\frac{2}{\nu}+\theta^2}+\theta\right)|x|+O\left(\frac{\log|x|}{|x|}\right)\right]\\
%= |x|\left(1+O\left(\frac{\log |x|}{x^2}\right)\right).
%\end{eqnarray*}
\begin{eqnarray*}
&& h(u) \\
&=& \inv{\sqrt{\frac{2}{\nu}+\theta^2}+\theta}\log\left(\frac{u}{a}\left(\frac{\sqrt{\frac{2}{\nu}+\theta^2}+\theta}{\left|\log\left[ u\left(\frac{\sqrt{\frac{2}{\nu}+\theta^2}+\theta}{|\inv{\nu}-1|}\right)^{\inv{\nu}-1}\right]\right|}\right)^{\inv{\nu}-1}\right)  \left(1+O\left(\frac{\log|\log u|}{(\log u)^2}\right)\right)\\
&=& \frac{\log u}{\sqrt{\frac{2}{\nu}+\theta^2}+\theta} - \frac{\left(\inv{\nu}-1\right)\log|\log u|}{\sqrt{\frac{2}{\nu}+\theta^2}+\theta} - \frac{\left(\inv{\nu}-1\right)\log\left(\frac{a^{\frac{\nu}{1-\nu}}}{\sqrt{\frac{2}{\nu}+\theta^2}+\theta}\right)}{\sqrt{\frac{2}{\nu}+\theta^2}+\theta}+ O\left(\frac{\log|\log u|}{|\log u|}\right)
%+ O\left(\frac{\log|\log u|}{|\log u|}\right).
\end{eqnarray*}
as $u\to 0^+$, by using (\ref{h general}) and (\ref{h(u) expansion d <= e}).
}

A Skew-Slash distribution on the other hand was first proposed in multivariate form in  \cite{A2008} and further studied in  \cite{LP2015}. A random variable $X$ is said to have a Skew-Slash distribution if $X$ is defined by a Normal variance-mean mixture as $X|Y \sim N(\mu+\theta/Y, \sigma^2Y)$, where $\mu$, $\theta\in\R$ and $\sigma^2>0$. The distribution of $Y$ is Beta$(\lambda,1$); that is, its pdf is: $f(y) = \lambda y^{\lambda - 1}, \, 0 <y <1 ; =0 \, \, \text{otherwise}.$ Here we only consider the case of $\theta>0$  so that the tail behaviour is similar to that of the Variance-Gamma in (\ref{VG tail}).
When $\mu=0$, $\sigma=1$ and $\theta>0$, from \citet[Lemma 2.1 and equation (11)]{LP2015}, we have 
\begin{equation*}
g(x) = \frac{\lambda \theta^{\lambda-1}}{2} |x|^{-(\lambda+1)} e^{-2\theta|x|}\left(1+O\left(\inv{|x|}\right)\right),
\end{equation*}
as $x\to -\infty$. By comparing this with (\ref{GG tail behaviour}), we have $a = \frac{\lambda\theta^{\lambda-1}}{2}$, $b = -(\lambda+1)$, $c= 2\theta$, and $d=e=1$, which in turn  suggests the following approximation to the lower quantile function: 
\begin{equation*}
y(u) = -\frac{(\lambda+1)}{2\theta}\left[\log\left(\frac{\frac{2\theta}{\lambda+1}\left(\frac{u}{\lambda\theta^{\lambda-1}/2}\right)^{-\inv{\lambda+1}}}{\left|\log\frac{2\theta}{\lambda+1}u^{-\inv{\lambda+1}}\right|}\right)\right] \sim \frac{\log u}{2\theta},
\end{equation*}
as $u\to 0^+$ by using (\ref{GG quantile approximation}). \citet[Lemma 3.1, equation (3)]{LP2015} use the fact that $h(u) = y(u)(1+o(1))$, $u\to 0^+$ with $y(u) = \log u/2\theta$. We can  obtain the behaviour of the quantile function as 
\begin{align*}
h(u) & =-\frac{(\lambda+1)}{2\theta}\left[\log\left(\frac{\frac{2\theta}{\lambda+1}\left(\frac{u}{\lambda\theta^{\lambda-1}/2}\right)^{-\inv{\lambda+1}}}{\left|\log\frac{2\theta}{\lambda+1}u^{-\inv{\lambda+1}}\right|}\right)\right] \left(1+O\left(\frac{\log|\log u|}{(\log u)^2}\right)\right)\\
&= \frac{\log u}{2\theta} + \frac{(\lambda+1)\log|\log u|}{2\theta} - \frac{\log\left(\lambda(2\theta)^{2\lambda}\right)}{2\theta}+O\left(\frac{\log|\log u|}{|\log u|}\right).
\end{align*}
as $u\to 0^+$ by using (\ref{h general}) and (\ref{h(u) expansion d <= e}) once again. 

\section*{Acknowledgements}
The authors thank the referee for a careful reading of the original version, and 
a list of suggestions which have resulted in a much improved paper.

%\section{Conclusions}
%
%We have established a result that allows us to evaluate  accuracy of  an approximation to the quantile function as a difference, by assessing  how well an  approximation inverts the cdf. Various distributions are used to illustrate our results, which include approximations to the quantile function of the Variance-Gamma and a Skew-Slash distribution. \\

%\begin{acknowledgements}
%If you'd like to thank anyone, place your comments here
%and remove the percent signs.
%\end{acknowledgements}

% BibTeX users please use one of
%\bibliographystyle{spbasic}      % basic style, author-year citations
%\bibliographystyle{spmpsci}      % mathematics and physical sciences
%\bibliographystyle{spphys}       % APS-like style for physics
%\bibliography{}   % name your BibTeX data base

% Non-BibTeX users please use

\end{document}